\documentclass{article}
\usepackage{amssymb}
\usepackage{amsfonts}
\usepackage{amsmath}
\usepackage{chapterbib}

\setcounter{MaxMatrixCols}{10}

\newtheorem{theorem}{Theorem}

\newtheorem{definition}[theorem]{Definition}

\newtheorem{lemma}[theorem]{Lemma}

\newtheorem{proposition}[theorem]{Proposition}
\newtheorem{remark}[theorem]{Remark}

\newenvironment{proof}[1][Proof]{\noindent\textbf{#1.} }{\ \rule{0.5em}{0.5em}}
\input{tcilatex}
\begin{document}

\title{Analytic Properties of the Conformal Dirac Operator on the Sphere}
\author{Brett Pansano \\
Northwest Arkansas Community College}
\maketitle

\begin{abstract}
In this paper the conformal Dirac operator on the sphere is defined to be
operating on the space of $L^{2}\left( S^{n}\right) $ Clifford
algebra-valued functions. The spinorial Laplacian of order d is defined and
used to establish Sobolev embedding theorems, and an extension of the
conformal Dirac operator to the Sobolev space setting. Interpolation of
polynomials in Clifford analysis is left for further investigation.
\end{abstract}

\section{\protect\bigskip Preliminaries}

A Clifford algebra $Cl_{n}$ can be generated from the standard orthogonal
space $%
\mathbb{R}
^{n}$ with the negative definite inner product, that is, $x^{2}=-\left\Vert
x\right\Vert ^{2}.$ Then a basis is given by the set of ordered products $%
e_{A}=e_{j_{1}}\cdot \cdot \cdot e_{j_{k}},$ where $j_{1}<\cdot \cdot \cdot
<j_{k}$ and $A=\left\{ e_{j_{1}},...,e_{j_{k}}\right\} $ is a subset of $%
\mathbb{N}
.$ For $A=\varnothing ,e_{A}=1.$which is the identity element in $Cl_{n}.$ A
consequence of the equality $x^{2}=-\left\Vert x\right\Vert ^{2}$ is that $%
\left\{ e_{1},...,e_{n}\right\} $ satisfies the anti-commutation
relationship $e_{j}e_{k}+e_{k}e_{j}=-2\delta _{kj},$ where $1\leq j,$ $k\leq
n$ and $\delta _{kj}$ is the Kronecker delta symbol. The Clifford algebra
can be considered as being the exterior algebra $\Lambda 
\mathbb{R}
^{n}$ where the inner-product is added onto the Clifford product,that is, $%
xy=x\wedge y-\left\langle x,y\right\rangle ,$ for arbitrary elements $x,y\in 
\mathbb{R}
^{n}.$

\begin{definition}
The Dirac operator in $%
\mathbb{R}
^{n}$ acts on smooth Cl$_{n}$- valued functions and is defined by 
\begin{equation*}
D_{x}f\left( x\right) =\sum_{i=1}^{n}e_{i}\partial _{i}f\left( x\right) ,
\end{equation*}%
where $\partial _{i}$ is the i-th partial derivative.
\end{definition}

\begin{remark}
An important point is that the $D_{x}^{2}=-\nabla _{x},$ where $\nabla _{x}$
is the Laplacian,This follows from the fact that $x^{2}$ =$-\left\Vert
x\right\Vert ^{2}$ for $x\in 
\mathbb{R}
^{n}$. We say that a function f is left (resp. right) monogenic if $Df=0$
(resp. $fD=0$).
\end{remark}

We now give several examples of theorems which generalize from classical
complex analysis to Clifford analysis. To this end, consider a piecewise-$%
C^{1}$ surface in $%
\mathbb{R}
^{n}$, that is, a codimension-1 manifold embedded in $%
\mathbb{R}
^{n}$. Let $U$ $\subset 
\mathbb{R}
^{n}$ be a domain and suppose $V$ is relatively compact in $U$, denoted $%
V\subset \subset U$, such that $\partial V$ is a piecewise-$C^{1}$ surface.
Then we wish to evaluate an integral of the form:%
\begin{equation*}
\int g\left( x\right) \eta \left( x\right) f\left( x\right) d\sigma \left(
x\right) ,
\end{equation*}%
where f and g are Clifford algebra-valued functions, $\eta \left( x\right) $
is normal to the $\partial V$ at x$\in 
\mathbb{R}
^{n}.$

\begin{theorem}
(Cauchy's Theorem) Suppose $U\subset 
\mathbb{R}
^{n}$ is a domain and $V$ is relatively compact with $\partial V$ a
piecewise-$C^{1}$ hypersurface. Suppose $f,g:U\rightarrow Cl_{n}$ are right
and left monogenic functions, respectively. Then 
\begin{equation*}
\int g\left( x\right) \eta \left( x\right) f\left( x\right) d\sigma \left(
x\right) =0.
\end{equation*}
\end{theorem}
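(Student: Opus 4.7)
The plan is to derive Cauchy's theorem as a Clifford-valued analogue of the divergence theorem, with the monogenicity conditions on $f$ and $g$ playing the role that the Cauchy--Riemann equations play in classical complex analysis. The central ingredient will be a Leibniz-type identity that assembles both the left and right Dirac operators simultaneously from a coordinate sum of partials.

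First, for each index $i\in\{1,\ldots,n\}$ I would introduce the $Cl_n$-valued function $h_i(x) := g(x)\,e_i\,f(x)$. Since the basis vector $e_i$ is constant, the ordinary product rule gives
\begin{equation*}
\partial_i h_i \;=\; (\partial_i g)\,e_i\,f \;+\; g\,e_i\,(\partial_i f),
\end{equation*}
with no commutativity needed, because $e_i$ is held fixed between the two factors. Summing over $i$ and recognizing the two resulting sums as the right- and left-Dirac operators yields the Clifford Leibniz rule
\begin{equation*}
\sum_{i=1}^{n} \partial_i h_i \;=\; (gD_x)\,f \;+\; g\,(D_x f).
\end{equation*}

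Next, I would apply the classical divergence theorem in $\mathbb{R}^n$ to the real vector field $(h_1,\ldots,h_n)$, working component-by-component in a fixed real basis of $Cl_n$ (this is legitimate because the divergence theorem is linear and $Cl_n$ is finite-dimensional over $\mathbb{R}$). Writing the outward unit normal as the Clifford vector $\eta(x) = \sum_i \eta_i(x)\,e_i$ under the embedding $\mathbb{R}^n\hookrightarrow Cl_n$, the boundary integral collapses to $\int g(x)\,\eta(x)\,f(x)\,d\sigma(x)$, while the volume integral becomes $\int_V\bigl[(gD_x)f + g(D_x f)\bigr]\,dx$. Since $gD_x=0$ and $D_x f=0$ by hypothesis, the volume integrand vanishes identically, which is exactly the claim.

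The main obstacle is bookkeeping rather than analysis: one must maintain the correct left/right placement of the basis vectors $e_i$ so that the non-commutativity of $Cl_n$ never interferes with the Leibniz step, and one must recognize the two sums as $gD_x$ and $D_x f$ rather than some mixed object in which the Dirac operator sits in the wrong position. A secondary technical wrinkle is the piecewise-$C^1$ hypothesis on $\partial V$, which is handled in the standard way by applying the divergence theorem on each smooth face and observing that the lower-dimensional seams contribute no surface measure; the regularity of $f$ and $g$ themselves is free, since monogenic functions are automatically smooth (indeed real-analytic on $U$).
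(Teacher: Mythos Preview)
Your argument is the standard Stokes/divergence-theorem proof of Cauchy's theorem in Clifford analysis, and it is correct. Note, however, that the paper does not actually supply a proof of this theorem: it is quoted as a preliminary result, with the surrounding discussion pointing to references [5,12] for details. There is therefore nothing in the paper to compare your proof against; what you have written is essentially the argument one finds in those references.

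One small bookkeeping point worth flagging. The statement, read literally, says that $f$ is right monogenic and $g$ is left monogenic, whereas your proof invokes $gD_{x}=0$ and $D_{x}f=0$, i.e.\ $g$ right monogenic and $f$ left monogenic. Your conditions are the ones that make the Leibniz identity
\[
\sum_{i=1}^{n}\partial_{i}\bigl(g\,e_{i}\,f\bigr) \;=\; (gD_{x})\,f \;+\; g\,(D_{x}f)
\]
vanish, and they are the standard hypotheses for this form of Cauchy's theorem; the labelling in the paper's statement appears to be transposed. You have in effect silently corrected it, but the phrase ``by hypothesis'' does not quite match the statement as printed, so it would be worth a one-line remark making the swap explicit.
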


The function $G:%
\mathbb{R}
^{n}/\left\{ 0\right\} \rightarrow 
\mathbb{R}
^{n}/\left\{ 0\right\} $ defined by $G(x)=-\frac{x}{\left\Vert x\right\Vert
^{n}}$ is the Cauchy kernel in Clifford analysis. The function is infinitely
often continuously differentiable and it is homogeneous of degree 1-n with
respect to the origin. For more details, see for example, [5,12].

\begin{theorem}
(Cauchy's Integral Formula) Let $U\subset 
\mathbb{R}
^{n}$ be a domain and let $V$ be open and relatively compact in $U$ such
that $\partial V$ is a $C^{1}$ hypersurface. Suppose f is a left monogenic
function on $U$. Then for $y\in V,$%
\begin{equation*}
f\left( y\right) =\frac{1}{\omega _{n}}\int_{\partial V}G\left( x-y\right)
\eta \left( x\right) f\left( x\right) d\sigma \left( x\right) ,
\end{equation*}%
where $\omega _{n}$ is the surface area of the unit sphere $S^{n-1}$ in $%
\mathbb{R}
^{n}.$
\end{theorem}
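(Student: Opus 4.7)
The plan is to reduce this to Cauchy's Theorem (the previous result) by excising a small ball around the singularity of the Cauchy kernel at $x = y$. The first step is to verify that the translated kernel $x \mapsto G(x-y)$ is right monogenic on $\mathbb{R}^n \setminus \{y\}$. This is a direct computation using the chain rule and the homogeneity of $G$: one checks that $G(x)D_x = 0$ for $x \ne 0$ by differentiating $-x/\lVert x \rVert^n$ component-wise and using $\sum_i e_i e_j = -e_j + \text{off-diagonal cancellations}$, then translating by $y$.

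Next, fix $y \in V$ and choose $\varepsilon > 0$ small enough that the closed ball $\overline{B_\varepsilon(y)}$ lies in $V$. Set $V_\varepsilon := V \setminus \overline{B_\varepsilon(y)}$. Then $\partial V_\varepsilon = \partial V \,\sqcup\, \partial B_\varepsilon(y)$, where on the inner boundary the outward unit normal to $V_\varepsilon$ is $-\omega$, with $\omega$ the outward unit normal to $B_\varepsilon(y)$. On $V_\varepsilon$ the function $g(x) := G(x-y)$ is right monogenic and $f$ is left monogenic, and $\partial V_\varepsilon$ is piecewise-$C^1$, so Cauchy's Theorem applies and yields
\begin{equation*}
\int_{\partial V} G(x-y)\,\eta(x)\,f(x)\,d\sigma(x) \;=\; \int_{\partial B_\varepsilon(y)} G(x-y)\,\omega(x)\,f(x)\,d\sigma(x).
\end{equation*}

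The remaining work is to evaluate the right-hand side as $\varepsilon \to 0$. Parametrising $x = y + \varepsilon \omega$ with $\omega \in S^{n-1}$ gives $G(x-y) = -\omega/\varepsilon^{n-1}$ and $d\sigma(x) = \varepsilon^{n-1}\,dS(\omega)$. The key algebraic cancellation comes from the Clifford relation $\omega^2 = -\lVert \omega \rVert^2 = -1$: the product $G(x-y)\,\omega$ collapses to the scalar $1/\varepsilon^{n-1}$, and the factors of $\varepsilon^{n-1}$ cancel against the surface measure. The integral therefore reduces to $\int_{S^{n-1}} f(y + \varepsilon \omega)\,dS(\omega)$.

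Finally, since $f$ is monogenic and hence (by ellipticity of $D^2 = -\nabla$) smooth on $U$, it is continuous at $y$, and dominated convergence — or simply uniform continuity on the compact ball — gives
\begin{equation*}
\lim_{\varepsilon \to 0} \int_{S^{n-1}} f(y + \varepsilon \omega)\,dS(\omega) \;=\; \omega_n\, f(y).
\end{equation*}
Dividing by $\omega_n$ yields the formula. The step I anticipate as the main technical obstacle is the right-monogenicity of $G$ at the start: the computation must be done carefully because $G$ is Clifford-valued and left- and right-actions do not commute, so one cannot simply quote the scalar harmonicity of $\lVert x \rVert^{2-n}$. Every other step is either a direct application of Cauchy's Theorem or the continuity of $f$.
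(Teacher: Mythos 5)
The paper states this theorem without proof, deferring to the references [5,12]; so there is no in-paper argument to compare against. Your excision argument is the standard proof found in those references and is correct in all essential respects: right-monogenicity of $G(\cdot - y)$ on $\mathbb{R}^{n}\setminus\{y\}$, application of Cauchy's Theorem on $V_{\varepsilon}=V\setminus\overline{B_{\varepsilon}(y)}$ with the sign flip on the inner boundary, the exact cancellation $G(x-y)\,\omega(x)\,d\sigma(x) = \bigl(-\omega/\varepsilon^{n-1}\bigr)\omega\,\varepsilon^{n-1}\,dS(\omega) = dS(\omega)$ via $\omega^{2}=-1$, and continuity of $f$ to pass to the limit.

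One small imprecision worth fixing: the algebraic identity you invoke to verify $GD_x=0$, written as ``$\sum_i e_i e_j = -e_j + \text{off-diagonal cancellations}$,'' does not parse as stated. The calculation you actually need is
\begin{equation*}
\sum_{i=1}^{n}\partial_i G(x)\, e_i
= -\sum_{i}\frac{e_i^2}{\|x\|^{n}} + \sum_{i}\frac{n\,x_i\, x\, e_i}{\|x\|^{n+2}}
= \frac{n}{\|x\|^{n}} + \frac{n\,x^2}{\|x\|^{n+2}} = 0,
\end{equation*}
using $\sum_i e_i^2 = -n$ and $\sum_i x_i\, x\, e_i = x\bigl(\sum_i x_i e_i\bigr) = x^2 = -\|x\|^2$. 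With that replacement your proof is complete. You are also right that one should apply Cauchy's Theorem on $U\setminus\{y\}$ rather than $U$, so that $\overline{V_{\varepsilon}}$ is compactly contained in a domain where both $f$ and $G(\cdot-y)$ are monogenic; you implicitly do this correctly. Invoking ellipticity to get smoothness of $f$ is harmless but unnecessary, since continuity of $f$ at $y$ is already built into the hypothesis that $f$ is monogenic on $U$.
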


Next we need to derive the Dirac operator on $%
\mathbb{R}
^{n+1}$ in polar form. For this we follow the approach in [5,12]. Consider
polar coordinates, $\left( r,\omega \right) \in 
\mathbb{R}
_{+}\times S^{n}$ in $%
\mathbb{R}
^{n+1},$ where $r=\left\vert x\right\vert =\left( x_{1}^{2}+\cdot \cdot
\cdot +x_{n+1}^{2}\right) ^{1/2},$ $\omega =\frac{x}{\left\vert x\right\vert 
}$ (a unit vector on S$^{n}).$ Then the Dirac operator admits the polar
decomposition%
\begin{equation*}
D_{x}=\frac{\omega }{r}\left( \Gamma _{\omega }+r\partial _{r}\right) .
\end{equation*}%
We write this as 
\begin{equation*}
D_{x}=\omega \left( \partial _{r}+\frac{1}{r}\Gamma _{\omega }\right) .
\end{equation*}%
Next we define the spinor connection on the sphere.

\begin{definition}
The spinor connection on the sphere in the x-direction is given $\nabla
_{x}=\partial _{x}+\frac{1}{2}x\omega $ where $\omega \in S^{n}$.
\end{definition}

In [4] it is shown via the spinor connection it is possible to derive the
conformal Dirac operator on the sphere. We state it here as a definition.

\begin{definition}
The conformal Dirac operator acting on smooth Clifford algebra-valued
functions on the sphere is given by 
\begin{equation*}
D_{\mathbf{s}}=\omega \left( \Gamma _{\omega }-\frac{n}{2}\right) ,
\end{equation*}%
where $\Gamma _{\omega }$ is the Dirac-Beltrami operator or the gamma
operator. See [4,11]
\end{definition}

The Dirac-Beltrami operator has the following properties.

\begin{lemma}
Let $\omega \in S^{n}.$ The we have the following intertwining operator
identity for the Dirac-Beltrami \ operator: $\Gamma _{\omega }\omega +\omega
\Gamma _{\omega }=\omega n.$
\end{lemma}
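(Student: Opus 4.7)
The plan is to extract this identity from the polar decomposition $D_x = \omega\bigl(\partial_r + \tfrac{1}{r}\Gamma_\omega\bigr)$ combined with the global anticommutator
\[
x D_x + D_x\, x = -(n+1) - 2E,
\]
where $E = \sum_{i=1}^{n+1} x_i \partial_i = r\partial_r$ is the Euler operator on $\mathbb{R}^{n+1}$. This anticommutator is a one-line Leibniz computation using $\partial_i x_j = \delta_{ij}$ together with the Clifford relation $e_i e_j + e_j e_i = -2\delta_{ij}$; it is the same flavor of calculation that yields $D_x^2 = -\nabla_x$ in the Remark above.

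First I would rewrite $xD_x$ in polar form. Using $x = r\omega$ and $\omega^2 = -1$, one gets immediately
\[
xD_x = r\omega \cdot \omega\bigl(\partial_r + \tfrac{1}{r}\Gamma_\omega\bigr) = -r\partial_r - \Gamma_\omega.
\]
Next I would compute $D_x\, x$ acting on a smooth test function $f$. Since $\omega$ is independent of $r$ and $\Gamma_\omega$ is purely tangential (and so commutes with multiplication by $r$), the product rule gives
\[
D_x(xf) = \omega\,\partial_r(r\omega f) + \tfrac{\omega}{r}\Gamma_\omega(r\omega f) = -f - r\partial_r f + \omega\,\Gamma_\omega(\omega f).
\]

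Adding the two expressions and matching against $-(n+1)f - 2r\partial_r f$ causes the radial terms to cancel, leaving the purely angular equation
\[
\omega\,\Gamma_\omega(\omega f) = \Gamma_\omega f - n f.
\]
Multiplying on the left by $\omega$ and applying $\omega^2 = -1$ once more rearranges this into $\Gamma_\omega(\omega f) + \omega\,\Gamma_\omega(f) = n\omega f$, which is the asserted operator identity.

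The main point to watch is non-commutativity: $\omega$ is a Clifford $1$-vector and $f$ is $Cl_n$-valued, so left- and right-multiplications by $\omega$ cannot be interchanged, and scalar factors such as $r$ must be kept on the correct side of each Clifford symbol. The only substantive ingredient beyond careful bookkeeping is the fact that $\Gamma_\omega$ carries no $r$-dependence, i.e.\ that it really is the angular part of $D_x$; this is built into the form of the polar decomposition recalled from [5,12].
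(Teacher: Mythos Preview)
Your argument is correct, but it follows a genuinely different route from the paper's. The paper obtains the identity by squaring the polar form of $D_x$ and matching it against the polar Laplacian $-\nabla_n = -\partial_r^2 - \tfrac{n}{r}\partial_r - \tfrac{1}{r^2}\nabla_\omega$; the relation $\omega\Gamma_\omega\omega - \Gamma_\omega = -n$ drops out as the coefficient of the first-order radial term $\tfrac{1}{r}\partial_r$, and left-multiplication by $\omega$ then gives the lemma. You instead use the first-order anticommutator $xD_x + D_xx = -(n+1) - 2E$ and compare its polar form with the right-hand side, so the angular identity appears directly after the radial terms $-2r\partial_r$ cancel. Your approach is a bit lighter-weight: it avoids computing $D_x^2$ and does not require knowing the polar form of the Laplacian in advance. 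The paper's approach, on the other hand, simultaneously yields (from the $\tfrac{1}{r^2}$ coefficient) the relation $\omega\Gamma_\omega\omega\Gamma_\omega + \Gamma_\omega = -\nabla_\omega$ linking $\Gamma_\omega$ to the Laplace--Beltrami operator, which your anticommutator trick does not see. Both proofs rely on the same underlying facts that $\omega$ is $r$-independent and that $\Gamma_\omega$ commutes with $r$ and $\partial_r$, so neither is deeper than the other; they are just reading off the identity from two different scalar relations satisfied by $D_x$.
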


\begin{proof}
Using that $D_{x}^{2}=-\nabla _{n},$ where $\nabla _{n}$ is the Laplacian in 
$%
\mathbb{R}
^{n+1}$ given by 
\begin{equation*}
-\nabla _{n}=-\partial _{r}^{2}-\frac{n}{r}\partial _{r}-\frac{1}{r^{2}}%
\nabla _{\omega },
\end{equation*}%
$\nabla _{\omega }$ being up to sign the Laplace-Beltrami operator on $%
S^{n}. $ Using this and the commutation relations $\left[ \partial
_{r},\omega \right] =0$ and $\left[ \Gamma _{\omega },\partial _{r}\right]
=0,$ it follows that 
\begin{eqnarray*}
D_{x}^{2} &=&\omega \left( \partial _{r}+\frac{1}{r}\Gamma _{\omega }\right)
\omega \left( \partial _{r}+\frac{1}{r}\Gamma _{\omega }\right) \\
&=&-\partial _{r}^{2}+\left( \frac{\omega \Gamma _{\omega }\omega -\Gamma
_{\omega }}{r}\right) \partial _{r}+\left( \frac{\omega \Gamma _{\omega
}\omega \Gamma _{\omega }+\Gamma _{\omega }}{r}\right) .
\end{eqnarray*}%
Therefore, 
\begin{equation*}
\omega \Gamma _{\omega }\omega -\Gamma _{\omega }=-n\left( Id\right)
\end{equation*}%
so that 
\begin{equation*}
\Gamma _{\omega }\omega +\omega \Gamma _{\omega }=\omega n\left( Id\right) ,
\end{equation*}%
or simply 
\begin{equation*}
\Gamma _{\omega }\omega +\omega \Gamma _{\omega }=\omega n.
\end{equation*}
\end{proof}

\begin{remark}
In [3] and [4] it is shown that $D_{s}$ can act on smooth Clifford
algebra-valued functions or smooth spinor sections. We are considering that $%
D_{s}$ acts on smooth Clifford algebra-valued functions instead of smooth
spinor sections. For more details, see, [3].
\end{remark}

\section{ Spherical Harmonics and Orthogonal Projection on the Sphere}

\begin{definition}
Let $\mathbb{H}_{m}$ denote the restriction to $S^{n}$ of the space of $%
Cl_{n+1}-$valued harmonic polynomials homogeneous of degree $m\in 
\mathbb{N}
\cup \{0\}.$ This is the space of spherical harmonic polynomials homogenous
of degree m. Further, let $P_{m}$ denote the restriction to $S^{n}$ of left
monogenic polynomials homogeneous of degree $m\in 
\mathbb{N}
\cup \{0\},$ and let $Q_{m}$ denote the restriction to $S^{n}$ of the space
of left monogenic functions of degree $-n-m$ where m=0,1,2,....
\end{definition}

The set of left monogenic and right monogenic polynomials provided with the
obvious laws of pointwise addition and (right) multiplication by Clifford
numbers are right Clifford modules.

\begin{definition}
Let $\mathbb{P}_{a}\left( E\right) $ denote the space of $Cl_{n+1}-$valued
harmonic polynomials of degree $\leq a,$where $a\in 
\mathbb{N}
\cup \left\{ 0\right\} $ on $S^{n}.$ That is, the space of the restrictions
to $S^{n}$ of all homogeneous harmonic polynomials of degree $\leq a,$ where 
$a\in 
\mathbb{N}
\cup \left\{ 0\right\} $ on $%
\mathbb{R}
^{n+1}.$
\end{definition}

\begin{lemma}
The space $\mathbb{H}_{m}$ of $Cl_{n+1}-$valued spherical harmonic
polynomials homogeneous of degree $m\in 
\mathbb{N}
\cup \left\{ 0\right\} $ on $%
\mathbb{R}
^{n+1}$ has dimension%
\begin{equation*}
N\left( n,m\right) =\dim H_{m}=\dim \left( Cl_{n+1}\right) \left( \dim
P_{m}+\dim Q_{m}\right) ,
\end{equation*}%
where 
\begin{equation*}
\dim P_{m}=2^{\left[ \frac{n+1}{2}\right] \frac{\left( m+n-1\right) !}{%
m!(n-1)!}}
\end{equation*}%
and 
\begin{equation*}
\dim Q_{m}=2^{\left[ \frac{n+1}{2}\right] \frac{\left( m+n-1\right) !}{%
m!(n-1)!}}
\end{equation*}
\end{lemma}

\begin{proof}
See $\left[ 5\right] .$
\end{proof}

To make further progress note 
\begin{equation*}
N\left( n,0\right) =1\text{ and }N(n,m)=2^{\left[ \frac{n+1}{2}\right] +1}%
\frac{(2m+n-1)\left( m+n-2\right) !}{\left( n-1\right) !m!}
\end{equation*}

and denote by 
\begin{equation}
\left\{ Y_{mk}^{\left( n\right) }\mid k=1,...,N\left( n,m\right) \right\}
\end{equation}%
a fixed $L^{2}\left( S^{n}\right) $ orthonormal system of $Cl_{n+1}-$ valued
spherical harmonic polynomial homogeneous of degree m for $\mathbb{H}%
_{m}\left( S^{n}\right) .$ In $\left[ 9\right] $ it is shown that Eqn. (1)
is an $L^{2}(S^{n})$ orthonormal basis for $\mathbb{H}_{m}\left(
S^{n}\right) .$ Moreover, it is shown that 
\begin{equation*}
L^{2}(S^{n})=\oplus _{m=0}^{\infty }\mathbb{H}_{m}\left( S^{n}\right)
=\dsum\limits_{m=0}^{\infty }\left( P_{m}\oplus Q_{m}\right) \left(
S^{n}\right) =\cup _{a=0}^{\infty }\mathbb{H}_{a}\left( S^{n}\right) .
\end{equation*}%
Consequently, we have 
\begin{equation*}
\mathbb{P}_{a}\left( S^{n}\right) =\oplus _{m=0}^{a}\mathbb{H}_{m}\left(
S^{n}\right) =\dsum\limits_{m=0}^{a}\left( P_{m}\oplus Q_{m}\right) \left(
S^{n}\right)
\end{equation*}%
with 
\begin{eqnarray*}
d_{a,n} &:&=\dim \left( \mathbb{P}_{a}\left( E\right) \right) =2^{\left[ 
\frac{n+1}{2}\right] +1}\dsum\limits_{m=0}^{a}N\left( n,m\right) =2^{\left[ 
\frac{n+1}{2}\right] +1}N\left( n+1,a\right) \\
&=&2^{\left[ \frac{n+1}{2}\right] +1}\frac{\left( 2a+n\right) \left(
a+n-1\right) !}{n!a!}
\end{eqnarray*}%
In $\left[ 1\right] $ it is shown that any two spherical harmonics of
different degrees are orthogonal, and hence the union of the sets (2) over
all $m\in 
\mathbb{N}
_{0}$ is a complete orthonormal system in the Hilbert module $L^{2}\left(
S^{n}\right) .$ Therefore, a function $f\in L^{2}\left( S^{n}\right) $ can
be represented in the $L^{2}\left( S^{n}\right) $ sense by a generalized
Fourier series expansion with respect to this complete orthonormal system of 
$Cl_{n+1}-$ valued spherical harmonic polynomial homogeneous of degree $m:$%
\begin{equation*}
f=\dsum\limits_{m=0}^{\infty }\dsum\limits_{k=1}^{N(n,m)}\hat{f}%
_{mk}^{\left( n\right) }Y_{mk}^{\left( n\right) }
\end{equation*}%
with the generalized Fourier coefficients:%
\begin{equation*}
\hat{f}_{mk}^{\left( n\right) }=\left( f,Y_{mk}^{\left( n\right) }\right)
_{L^{2}\left( S^{n}\right) }=\int_{S^{n}}\overline{f\left( \omega \right) }%
Y_{mk}^{\left( n\right) }\left( \omega \right) dS_{n}\left( \omega \right) ,
\end{equation*}%
where $dS_{n}\left( \omega \right) $ is Lebesgue surface measure on $S^{n}.$

\section{ Polynomial Basis and Orthogonal Projection on the Unit Sphere}

\begin{definition}
Let $\mathbb{H}_{m}$ denote the restriction to $S^{n}$ of the space of $%
Cl_{n+1}-$valued harmonic polynomials homogeneous of degree $m\in 
\mathbb{N}
\cup \{0\}.$ This is the space of spherical harmonic polynomials homogenous
of degree m. Further, let $P_{m}$ denote the restriction to $S^{n}$ of left
monogenic polynomials homogeneous of degree $m\in 
\mathbb{N}
\cup \{0\},$ and let $Q_{m}$ denote the restriction to $S^{n}$ of the space
of left monogenic functions of degree $-n-m$ where \textit{m=0,1,2,...}.
\end{definition}

The set of left monogenic and right monogenic polynomials provided with the
obvious laws of pointwise addition and (right) multiplication by Clifford
numbers are right Clifford modules.

\begin{definition}
Let $\mathbb{P}_{a}\left( E\right) $ denote the space of $Cl_{n+1}-$valued
harmonic polynomials of degree $\leq a,$where $a\in 
\mathbb{N}
\cup \left\{ 0\right\} $ on $S^{n}.$ That is, the space of the restrictions
to $S^{n}$ of all homogeneous harmonic polynomials of degree $\leq a,$ where 
$a\in 
\mathbb{N}
\cup \left\{ 0\right\} $ on $%
\mathbb{R}
^{n+1}.$
\end{definition}

\begin{lemma}
The space $\mathbb{H}_{m}$ of $Cl_{n+1}-$valued spherical harmonic
polynomials homogeneous of degree $m\in 
\mathbb{N}
\cup \left\{ 0\right\} $ on $%
\mathbb{R}
^{n+1}$ has dimension%
\begin{equation*}
N\left( n,m\right) =\dim H_{m}=\dim \left( Cl_{n+1}\right) \left( \dim
P_{m}+\dim Q_{m}\right) ,
\end{equation*}%
where 
\begin{equation*}
\dim P_{m}=2^{\left[ \frac{n+1}{2}\right] \frac{\left( m+n-1\right) !}{%
m!(n-1)!}}
\end{equation*}%
and 
\begin{equation*}
\dim Q_{m}=2^{\left[ \frac{n+1}{2}\right] \frac{\left( m+n-1\right) !}{%
m!(n-1)!}}
\end{equation*}
\end{lemma}

\begin{proof}
See $\left[ 5\right] .$
\end{proof}

To make further progress note 
\begin{equation*}
N\left( n,0\right) =1\text{ and }N(n,m)=2^{\left[ \frac{n+1}{2}\right] +1}%
\frac{(2m+n-1)\left( m+n-2\right) !}{\left( n-1\right) !m!}
\end{equation*}

and denote by 
\begin{equation}
\left\{ Y_{mk}^{\left( n\right) }\mid k=1,...,N\left( n,m\right) \right\}
\end{equation}%
a fixed $L^{2}\left( S^{n}\right) $ orthonormal system of $Cl_{n+1}-$ valued
spherical harmonic polynomial homogeneous of degree m for $\mathbb{H}%
_{m}\left( S^{n}\right) .$ In $\left[ 4\right] $ it is shown that Eqn. (1)
is an $L^{2}(S^{n})$ orthonormal basis for $\mathbb{H}_{m}\left(
S^{n}\right) .$ Moreover, it is shown that 
\begin{equation*}
L^{2}(S^{n})=\oplus _{m=0}^{\infty }\mathbb{H}_{m}\left( S^{n}\right)
=\dsum\limits_{m=0}^{\infty }\left( P_{m}\oplus Q_{m}\right) \left(
S^{n}\right) =\cup _{a=0}^{\infty }\mathbb{H}_{a}\left( S^{n}\right) .
\end{equation*}%
Consequently, we have 
\begin{equation*}
\mathbb{P}_{a}\left( S^{n}\right) =\oplus _{m=0}^{a}\mathbb{H}_{m}\left(
S^{n}\right) =\dsum\limits_{m=0}^{a}\left( P_{m}\oplus Q_{m}\right) \left(
S^{n}\right)
\end{equation*}%
with 
\begin{eqnarray*}
d_{a,n} &:&=\dim \left( \mathbb{P}_{a}\left( E\right) \right) =2^{\left[ 
\frac{n+1}{2}\right] +1}\dsum\limits_{m=0}^{a}N\left( n,m\right) =2^{\left[ 
\frac{n+1}{2}\right] +1}N\left( n+1,a\right) \\
&=&2^{\left[ \frac{n+1}{2}\right] +1}\frac{\left( 2a+n\right) \left(
a+n-1\right) !}{n!a!}
\end{eqnarray*}%
In $\left[ 9\right] $ it is shown that any two spherical harmonics of
different degrees are orthogonal, and hence the union of the sets (1) over
all $m\in 
\mathbb{N}
_{0}$ is a complete orthonormal system in the Hilbert module $L^{2}\left(
S^{n}\right) .$ Therefore, a function $f\in L^{2}\left( S^{n}\right) $ can
be represented in the $L^{2}\left( S^{n}\right) $ sense by a generalized
Fourier series expansion with respect to this complete orthonormal system of 
$Cl_{n+1}-$ valued spherical harmonic polynomial homogeneous of degree $m:$%
\begin{equation*}
f=\dsum\limits_{m=0}^{\infty }\dsum\limits_{k=1}^{N(n,m)}\hat{f}%
_{mk}^{\left( n\right) }Y_{mk}^{\left( n\right) }
\end{equation*}%
with the generalized Fourier coefficients:%
\begin{equation*}
\hat{f}_{mk}^{\left( n\right) }=\left( f,Y_{mk}^{\left( n\right) }\right)
_{L^{2}\left( E\right) }=\int_{S^{n}}\overline{f\left( \omega \right) }%
Y_{mk}^{\left( n\right) }\left( \omega \right) dS_{n}\left( \omega \right) ,
\end{equation*}%
where $dS_{n}\left( \omega \right) $ is Lebesgue surface measure on $S^{n}.$
The surface area of $S^{n}$ in $%
\mathbb{R}
^{n+1}$ is denoted by $S_{n}\left( \omega \right) ,$ where 
\begin{equation*}
S_{n}=\left\vert S_{n}\right\vert =\frac{2\pi ^{\frac{n+1}{2}}}{\Gamma
\left( \frac{n+1}{2}\right) }.
\end{equation*}

\begin{definition}
The orthogonal projection operator $T_{a}:L^{2}\left( S^{n}\right)
\rightarrow \mathbb{P}_{a}\left( S^{n}\right) $ onto $\mathbb{P}_{a}\left(
S^{n}\right) $ may be represented by 
\begin{equation*}
T_{a}f=\sum_{m=0}^{a}\dsum\limits_{k=1}^{N(n,m)}\hat{f}_{mk}^{\left(
n\right) }Y_{mk}^{\left( n\right)
}=\sum_{m=0}^{a}\dsum\limits_{k=1}^{N(n,m)}\left( f,Y_{mk}^{\left( n\right)
}\right) _{L_{2}\left( S^{n}\right) }Y_{mk}^{\left( n\right) }.
\end{equation*}
\end{definition}

\bigskip Another form of $T_{a}$ can be given via the reproducing kernel of $%
\mathbb{P}_{a}\left( S^{n}\right) .$

\begin{definition}
The reproducing kernel of $\mathbb{P}_{a}\left( S^{n}\right) $ is the
uniquely determined kernel $G_{a}:S^{n}\times S^{n}\rightarrow 
\mathbb{R}
$ with the following properties : $\left( i\right) $ $G_{a}\left( \omega
,\cdot \right) \in \mathbb{P}_{a}\left( S^{n}\right) $ for every fixed $%
\omega \in S^{n},$ $\left( ii\right) $ $G_{a}\left( w,\upsilon \right) $ for
all $\omega ,\upsilon \in S^{n},$ and $\left( iii\right) $ the following
reproducing property%
\begin{equation*}
\left( f,G_{a}\left( \omega ,\cdot \right) \right) _{L^{2}\left(
S^{n}\right) }=f\left( \omega \right) \text{ }
\end{equation*}%
for all $\omega \in S^{n}$ and $f\in \mathbb{P}_{a}\left( S^{n}\right) .$
\end{definition}

Before stating the reproducing kernel $G_{a}$ we need the following
well-known result.

\begin{proposition}
The addition theorem for spherical harmonic polynomials homogeneous of
degree $m\in 
\mathbb{N}
_{0}$ is given by 
\begin{equation*}
\dsum\limits_{k=1}^{N(n,m)}Y_{mk}^{\left( n\right) }\left( \omega \right)
\otimes Y_{mk}^{\ast \left( n\right) }\left( \nu \right) =\frac{1}{\omega
_{n+1}}N(n,m)\frac{C_{m}^{\frac{n-1}{2}}\left( -\left\langle \omega
,\upsilon \right\rangle \right) }{C_{m}^{\frac{n-1}{2}}\left( 1\right) },
\end{equation*}%
where $C_{m}^{\frac{n-1}{2}}$ is the ultra-spherical or Gegenbauer
polynomial of degree $m$ with index $\lambda =\frac{n-1}{2},$ where 
\begin{equation*}
C_{m}^{\lambda }\left( t\right) =\frac{\left( 2\lambda \right) _{m}}{\left(
\lambda +1\right) _{m}}P_{m}^{\left( \lambda -\frac{1}{2},\lambda -\frac{1}{2%
}\right) }\left( t\right) ,\text{ }t\in \left[ -1,1\right] .
\end{equation*}%
with $\left( a\right) _{0}=1,$ $\left( a\right) _{l}=a\left( a+1\right)
\cdot \cdot \cdot \left( a+l-1\right) ,$ $a\in 
\mathbb{R}
,$ $l\in 
\mathbb{N}
,$ and $P_{m}^{\left( \lambda -\frac{1}{2},\lambda -\frac{1}{2}\right) }:%
\left[ -1,1\right] \rightarrow 
\mathbb{R}
$ is the Jacobi polynomial $P_{m}^{\left( \alpha ,\beta \right) }$ of degree
m with indices $\alpha =\beta =\lambda -\frac{1}{2}.$ Note that $\omega
_{n+1}$ is the surface area of the unit sphere $S^{n}$ in $%
\mathbb{R}
^{n+1}.$
\end{proposition}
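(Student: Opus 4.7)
The plan is to establish the identity in three conceptually distinct steps: first prove basis-independence of the left-hand kernel, then use rotational invariance to reduce the kernel to a function of $\langle \omega, \nu\rangle$ alone, and finally identify that function as a multiple of the Gegenbauer polynomial and pin down the constant.

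First I would introduce the zonal kernel
$$Z_m(\omega,\nu) := \sum_{k=1}^{N(n,m)} Y_{mk}^{(n)}(\omega)\otimes Y_{mk}^{*(n)}(\nu)$$
and verify that it is independent of the choice of $L^{2}(S^{n})$-orthonormal basis for $\mathbb{H}_m(S^n)$. This is a standard change-of-basis calculation: if $\widetilde{Y}_{mj}^{(n)} = \sum_k U_{jk} Y_{mk}^{(n)}$ is another orthonormal basis with $U$ unitary (with respect to the Clifford-valued inner product), then the coefficients $U U^{*}$ collapse to the identity in the double sum. Consequently $Z_m$ is an intrinsic object attached to the right Clifford module $\mathbb{H}_m(S^n)$.

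Second, I would exploit the action of $SO(n+1)$ on $S^n$, which lifts through the Spin double cover to a unitary action on $\mathbb{H}_m(S^n)$ preserving the $L^2$ pairing and the right Clifford module structure. Basis-independence then forces $Z_m(R\omega, R\nu) = Z_m(\omega,\nu)$ for all $R \in SO(n+1)$. Since $SO(n+1)$ is transitive on pairs in $S^n\times S^n$ with fixed inner product, $Z_m$ depends only on $\langle \omega,\nu\rangle$. Fixing $\omega = e_{n+1}$ and regarding $\nu\mapsto Z_m(e_{n+1},\nu)$ as a spherical harmonic of degree $m$ invariant under the stabilizer $SO(n)$, the classical uniqueness of zonal harmonics yields
$$Z_m(\omega,\nu) = \lambda_m\, C_m^{(n-1)/2}\bigl(-\langle \omega,\nu\rangle\bigr)$$
for some Clifford scalar $\lambda_m$; the sign inside the Gegenbauer polynomial is forced by the convention $\omega^2 = -\|\omega\|^2$ that is built into the Clifford structure.

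Third, I would determine $\lambda_m$ by taking a trace on the diagonal. Integrating $Z_m(\omega,\omega)$ over $S^n$ and using the $L^2$-orthonormality of $\{Y_{mk}^{(n)}\}$ gives $\int_{S^n} Z_m(\omega,\omega)\,dS_n(\omega) = N(n,m)$, while the right-hand side integrates to $\lambda_m\,\omega_{n+1}\,C_m^{(n-1)/2}(-1)/1$ after rescaling by $C_m^{(n-1)/2}(1)$. Solving for $\lambda_m$ reproduces the stated constant $N(n,m)/\omega_{n+1}$.

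The main obstacle will be handling the tensor product $Y_{mk}^{(n)}(\omega)\otimes Y_{mk}^{*(n)}(\nu)$ carefully in the non-commutative Clifford setting. The tempting shortcut of decomposing $Y_{mk}^{(n)} = \sum_A Y_{mk,A} e_A$ into scalar harmonic components and applying the classical real-valued addition theorem componentwise must be carried out so that the right Clifford module structure (and in particular the involution defining the $*$) is respected; otherwise one obtains a family of $2^{n+1}$ uncoupled scalar identities rather than the single Clifford-valued identity above. Verifying that these componentwise contributions recombine correctly — so that the common Gegenbauer factor persists and the Clifford scalar $\lambda_m$ really is a scalar — is the delicate point of the proof.
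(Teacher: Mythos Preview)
The paper does not prove this proposition: immediately after the statement it writes ``See $[9,\text{ p.~10}]$'' and defers entirely to M\"uller's classical treatment of scalar spherical harmonics. Your three-step outline (basis independence of the kernel, $SO(n{+}1)$-invariance reducing it to a zonal function of $\langle\omega,\nu\rangle$, normalization by a diagonal integral) is precisely the classical argument found in that reference, so you are supplying the proof the paper omits rather than offering an alternative to one.

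One caution. Your claim that the minus sign inside $C_m^{(n-1)/2}\bigl(-\langle\omega,\nu\rangle\bigr)$ is ``forced by the convention $\omega^2=-\|\omega\|^2$'' does not hold up: the argument of the Gegenbauer polynomial is the ordinary Euclidean inner product of unit vectors, which is insensitive to the Clifford sign convention, and the standard addition theorem carries $+\langle\omega,\nu\rangle$. This feeds directly into your normalization step: on the diagonal one has $-\langle\omega,\omega\rangle=-1$, and since $C_m^{\lambda}(-1)=(-1)^m C_m^{\lambda}(1)$, your trace computation actually produces $\lambda_m=(-1)^m N(n,m)\big/\bigl(\omega_{n+1}C_m^{(n-1)/2}(1)\bigr)$, which disagrees with the stated constant for odd $m$. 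Most likely the minus sign in the paper's displayed formula is a misprint; your argument is otherwise sound. The Clifford-module subtlety you flag at the end is manageable: choosing an orthonormal basis of the form $Y_{mj}^{\mathrm{scalar}}\,e_A$ makes the sum factor, after which the scalar addition theorem applies componentwise and the Clifford contributions collapse to a scalar multiple of the identity.
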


See $\left[ 9,\text{ p. 10}\right] .$

Therefore, we can write the $L^{2}\left( S^{n}\right) -$orthogonal
projection operator $T_{a}$ onto $\mathbb{P}_{a}\left( S^{n}\right) $ with
the reproducing kernel $G_{a}$ of $\mathbb{P}_{a}\left( S^{n}\right) $ as%
\begin{equation*}
T_{a}f\left( \omega \right) =\int_{S^{n}}f\left( v\right) G_{a}\left( \omega
,\nu \right) dS_{n}\left( \nu \right) =\left( f,G_{a}\left( \omega ,\cdot
\right) \right) _{L^{2}\left( S^{n}\right) },
\end{equation*}%
which is a form of the Cauchy Integral Formula in Clifford Analysis.

\begin{definition}
For $d\in 
\mathbb{N}
,$ the spinorial Laplacian of order $d>0$ on S$^{n}$ is the operator defined
by 
\begin{equation*}
\left( D_{\mathbf{s}}^{2}\right) ^{d}=\left( \Gamma _{\omega }-\frac{n}{2}%
\right) ^{2d}
\end{equation*}
\end{definition}

We need the following lemma from Balinski and Ryan [1]. We reproduce the
proof here for completeness.

\begin{lemma}
The spectral resolution of the conformal Dirac operator $D_{s}$ on the unit
sphere $S^{n}$ is given by 
\begin{equation*}
\sigma \left( D_{\mathbf{s}}\right) =\left\{ \left( m+\frac{n}{2}\right)
;m=0,1,2,...\cup -\left( m+\frac{n}{2}\right) ;m=0,1,2,...\right\}
\end{equation*}%
and where the eigenvectors are $p_{m}\in P_{m}$ and $q_{m}\in Q_{m}.$
\end{lemma}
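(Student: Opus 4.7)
The plan is to exploit the polar decomposition $D_x = \omega(\partial_r + r^{-1}\Gamma_\omega)$ to identify the action of $\Gamma_\omega$ on $P_m$ and $Q_m$, and then substitute into $D_{\mathbf{s}} = \omega(\Gamma_\omega - n/2)$ to read off the eigenvalues.

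First I would take $p_m \in P_m$, realized as the restriction to $S^n$ of a left monogenic homogeneous polynomial of the form $r^{m} p_m(\omega)$. Applying the polar form of $D_x$ and enforcing monogenicity gives
\[
0 = D_x\bigl(r^{m} p_m(\omega)\bigr) = \omega\, r^{m-1}\bigl(m\,p_m(\omega) + \Gamma_\omega p_m(\omega)\bigr),
\]
so $\Gamma_\omega p_m = -m\, p_m$. The analogous computation for $q_m \in Q_m$, lifted to $r^{-n-m} q_m(\omega)$, yields $\Gamma_\omega q_m = (n+m)\, q_m$. Substituting these into the definition of $D_{\mathbf{s}}$ produces
\[
D_{\mathbf{s}} p_m = -\bigl(m+\tfrac{n}{2}\bigr)\,\omega p_m, \qquad D_{\mathbf{s}} q_m = \bigl(m+\tfrac{n}{2}\bigr)\,\omega q_m.
\]

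Next I would invoke the intertwining identity of the preceding lemma, $\Gamma_\omega \omega + \omega \Gamma_\omega = n\,\omega$, to verify that left multiplication by $\omega$ carries the $(-m)$-eigenspace of $\Gamma_\omega$ into the $(n+m)$-eigenspace and vice versa; in other words $M_\omega$ exchanges $P_m$ and $Q_m$. Combined with $\omega^2 = -1$, this shows that $D_{\mathbf{s}}$ acts on $P_m \oplus Q_m$ as an off-diagonal block whose square equals $(m+\tfrac{n}{2})^{2}\,\mathrm{Id}$. Hence the eigenvalues on this invariant block are precisely $\pm(m+\tfrac{n}{2})$, with explicit eigenvectors given by the symmetric and antisymmetric combinations $p_m \mp \omega p_m$ for $p_m \in P_m$ (equivalently $\omega q_m \mp q_m$ for $q_m \in Q_m$). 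The Hilbert-module decomposition $L^2(S^n) = \bigoplus_{m\ge 0}(P_m \oplus Q_m)$, established in the preceding section, then confirms that these eigenvalues exhaust $\sigma(D_{\mathbf{s}})$.

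The main obstacle is interpretive rather than computational: $D_{\mathbf{s}}$ does not literally preserve $P_m$ or $Q_m$ individually but interchanges them, so the lemma's phrasing ``the eigenvectors are $p_m \in P_m$ and $q_m \in Q_m$'' must be read through the natural identification $q_m = \omega p_m \in Q_m$, with the true eigenvectors living in $P_m \oplus Q_m$. A minor technical point to track is that the homogeneous extension chosen for $Q_m$-elements (degree $-n-m$) is what produces the correct $\Gamma_\omega$ eigenvalue $n+m$, which in turn is what makes the two blocks combine into the symmetric spectrum $\pm(m+\tfrac{n}{2})$.
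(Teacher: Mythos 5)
Your proof follows the same route as the paper's: compute the $\Gamma_\omega$-eigenvalues on $P_m$ and $Q_m$ from the polar decomposition of $D_x$ together with monogenicity, substitute into $D_{\mathbf{s}} = \omega\left(\Gamma_\omega - \tfrac{n}{2}\right)$, use $\omega P_m \subset Q_m$ and $\omega Q_m \subset P_m$, and invoke the $L^2(S^n)$-decomposition $\bigoplus_m (P_m \oplus Q_m)$ to conclude that the spectrum is exhausted. You also go a step beyond the paper's proof by observing explicitly that $D_{\mathbf{s}}$ genuinely interchanges $P_m$ and $Q_m$, acting as an off-diagonal block on $P_m \oplus Q_m$ with square $\left(m+\tfrac{n}{2}\right)^2\mathrm{Id}$, so the true eigenvectors are the combinations $p_m \mp \omega p_m$; the paper's proof instead just relabels $\omega p_{mj}$ as a $Q_m$-element and stops, which leaves the lemma's phrasing ``the eigenvectors are $p_m \in P_m$ and $q_m \in Q_m$'' suggestive rather than literally correct, a point your remarks rightly flag.
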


\begin{proof}
Let $\phi :S^{n}\rightarrow Cl_{n+1}$ be a $C^{1}$ function. Then $\phi \in
L^{2}\left( S^{n}\right) $ and 
\begin{equation*}
\phi \left( \omega \right) =\sum_{m=0}^{\infty }\sum_{j=0}^{\dim
P_{m}}p_{mj}\left( \omega \right) +\sum_{m=0}^{-\infty }\sum_{j=0}^{\dim
Q_{m}}q_{mj}\left( \omega \right) ,
\end{equation*}%
where $p_{mj}\in P_{m}$ and $q_{mj}\in Q_{m}$ are eigenvectors of the
Dirac-Beltrami operator $\Gamma _{\omega }.$ Further they may be chosen so
that within $P_{m}$ they are mutually orthogonal. This is similar for the
eigenvectors in $Q_{m}.$ As $\phi \in C^{1}$ then $D_{s}\phi \in C^{0}\left(
S^{n}\right) $ and so $D_{s}\phi \in L^{2}\left( S^{n}\right) .$
Consequently, 
\begin{equation*}
D_{\mathbf{s}}\phi =\omega \left( \sum_{m=0}^{\infty }\left( m+\frac{n}{2}%
\right) \sum_{j=0}^{\dim P_{m}}p_{mj}\left( \omega \right)
+\sum_{m=0}^{\infty }\left( -m-\frac{n}{2}\right) \sum_{j=0}^{\dim
Q_{m}}q_{mj}\left( \omega \right) \right) .
\end{equation*}%
But $\omega p_{m}\left( \omega \right) \in Q_{m}$ and $\omega q_{m}\left(
\omega \right) \in P_{m}.$ Consequently,%
\begin{equation*}
D_{\mathbf{s}}\phi =\sum_{m=0}^{\infty }\left( m+\frac{n}{2}\right)
\sum_{j=0}^{\dim Q_{m}}q_{mj}\left( \omega \right) +\sum_{m=0}^{\infty
}\left( -m-\frac{n}{2}\right) \sum_{j=0}^{\dim P_{m}}p_{mj}\left( \omega
\right) .
\end{equation*}%
Therefore, the spectrum, $\sigma \left( D_{\mathbf{s}}\right) ,$ of the
conformal Dirac operator D$_{s}$ on the unit sphere $S^{n}$ is given by 
\begin{equation*}
\sigma \left( D_{\mathbf{s}}\right) =\left\{ \left( m+\frac{n}{2}\right)
;m=0,1,2,...\cup -\left( m+\frac{n}{2}\right) ;m=0,1,2,...\right\} .
\end{equation*}
\end{proof}

\section{Sobolev Space Estimates in Clifford Analysis on the Unit Sphere}

The embedding theorems include various inclusions between the Sobolev spaces 
$L_{s}^{2}.$

\begin{definition}
For any non-negative integer $k$, let $C^{k}\left( S^{n}\right) $ denote the
space of $k-$times continuously differentiable functions on $S^{n}$ equipped
with the uniform $C^{k}-norm,$ defined for $\varphi \in $ $C^{k}\left(
S^{n}\right) $ by 
\begin{equation*}
\left\Vert \varphi \left( \omega \right) \right\Vert _{C^{k}\left(
S^{n}\right) }^{2}=\sum_{j=0}^{k}\sup_{\omega \in TS^{n}\backslash \left\{
0\right\} }\left\vert \nabla ^{j}\varphi \left( \omega \right) \right\vert
^{2},
\end{equation*}%
Note that $TS^{n}$ is the tangent space at $\omega \in S^{n}$ and $\nabla $
is the spinor connection. The subspace of compactly supported functions is
denoted by $C_{0}^{k}\left( S^{n}\right) .$
\end{definition}

\begin{definition}
For a non-negative real number $s$, we define the Sobolev spaces $%
L_{s}^{2}\left( S^{n}\right) $ as the closure of $\oplus _{m=0}^{\infty }%
\mathbb{H}\left( S^{n}\right) $ with respect to the $s^{th}$ Sobolev norm%
\begin{equation*}
\left\Vert \varphi \right\Vert _{L_{s}^{2}\left( S^{n}\right)
}^{2}=\sum_{m=0}^{\infty }\left( m+\frac{n-1}{2}\right)
^{2s}\sum_{k=1}^{N\left( n,m\right) }\left\vert \hat{\varphi}_{mk}^{\left(
n\right) }\right\vert ^{2}
\end{equation*}%
The space L$_{s}^{2}$ is a Hilbert space with the inner product%
\begin{equation*}
\left( \varphi ,\psi \right) _{L_{s}^{2}\left( S^{n}\right)
}=\sum_{m=0}^{\infty }\left( m+\frac{n-1}{2}\right) ^{2s}\sum_{k=1}^{N\left(
n,m\right) }\hat{\varphi}_{m,k}^{\left( n\right) }\hat{\psi}_{m,k}^{\left(
n\right) }\text{ for }\varphi ,\psi \in L_{s}^{2}\left( S^{n}\right) ,
\end{equation*}%
which induces the norm $\left\Vert \cdot \right\Vert _{L_{s}^{2}\left(
S^{n}\right) }.$ Thus 
\begin{equation*}
L_{s}^{2}\left( S^{n}\right) =\left\{ \varphi \in S^{n}:\left\Vert \varphi
\right\Vert _{L_{s}^{2}\left( S^{n}\right) }^{2}<\infty \right\} ,
\end{equation*}%
where $\varphi =\sum_{m=0}^{\infty }\sum_{k=1}^{N\left( n,m\right) }\hat{%
\varphi}_{mk}^{\left( n\right) }Y_{mk}^{\left( n\right) }$ with inner
product $\left\langle \varphi ,\psi \right\rangle _{L_{s}^{2}\left(
S^{n}\right) }.$
\end{definition}

\begin{remark}
For $s>\frac{n}{2}$ there exists a constant $c_{s}$ such that 
\begin{equation*}
\left\Vert f\right\Vert _{C\left( S^{n}\right) }\leq c_{s}\left\Vert
f\right\Vert _{L_{s}^{2}\left( S^{n}\right) },\text{ }\forall f\in
L_{s}^{2}\left( S^{n}\right) ,
\end{equation*}%
that is, $L_{s}^{2}\left( S^{n}\right) $ is embedded in $C\left(
S^{n}\right) .$ Also, for $s>\frac{n}{2}$ the space $L_{s}^{2}\left(
S^{n}\right) $ is a reproducing kernel Hilbert space. Note that the spaces $%
L_{s}^{2}\left( S^{n}\right) $ are nested, that is, $L_{t}^{2}\left(
S^{n}\right) \subset L_{s}^{2}\left( S^{n}\right) $ whenever $t\geq s.$
\end{remark}

\begin{definition}
We denote by 
\begin{equation*}
\mathbb{P}_{a}^{\bot }\left( S^{n}\right) =\left\{ f\in L^{2}\left(
S^{n}\right) :\left( f,g\right) _{L_{2}\left( S^{n}\right) }=0\text{ }%
\forall g\in \mathbb{P}_{a}\left( S^{n}\right) \right\}
\end{equation*}%
the orthogonal complement of $\mathbb{P}_{a}\left( S^{n}\right) $ in $%
L^{2}\left( S^{n}\right) .$
\end{definition}

The orthogonal complement of $\mathbb{P}_{a}\left( S^{n}\right) $ in $%
L_{s}^{2}\left( S^{n}\right) ,$that is the space of all those functions in $%
L^{2}\left( S^{n}\right) $ which are $L^{2}\left( S^{n}\right) -$orthogonal
to $\mathbb{P}_{a}\left( S^{n}\right) .$ From the definition of the inner
product $\left( \cdot ,\cdot \right) _{L_{s}^{2}\left( S^{n}\right) },$ the
orthogonal complement of $\mathbb{P}_{a}\left( S^{n}\right) $ in $%
L_{t}^{2}\left( S^{n}\right) $ is simply $\mathbb{P}_{a}^{\bot }\left(
S^{n}\right) $ $\cap L_{t}^{2}\left( S^{n}\right) .$

The following estimates hold for either sections in $\mathbb{P}_{a}\left(
S^{n}\right) $ or in $\mathbb{P}_{a}^{\bot }\left( S^{n}\right) $ $\cap
L_{s}^{2}\left( S^{n}\right) .$

\begin{lemma}
The following estimates hold in the Sobolev spaces $L_{s}^{2}\left(
S^{n}\right) .$

\begin{enumerate}
\item Let $s\geq 0.$ Let $\varphi \in \mathbb{P}_{a}\left( S^{n}\right) .$
Then $L_{0}^{2}\left( S^{n}\right) \subset L_{s}^{2}\left( S^{n}\right) ,$
that is 
\begin{equation*}
\left\Vert \varphi \right\Vert _{L_{s}^{2}\left( S^{n}\right) }\leq \left( d+%
\frac{n-1}{2}\right) ^{s}\left\Vert \varphi \right\Vert _{\left\Vert \varphi
\right\Vert _{L_{0}^{2}\left( S^{n}\right) }}.
\end{equation*}

\item Let $s\geq t\geq 0.$ Then for any $\varphi \in \mathbb{P}_{a}^{\bot
}\left( S^{n}\right) $ $\cap L_{s}^{2}\left( S^{n}\right) ,$ 
\begin{equation*}
\left\Vert \varphi \right\Vert _{L_{t}^{2}\left( S^{n}\right) }\leq \left( d+%
\frac{n-1}{2}\right) ^{t-s}\left\Vert \varphi \right\Vert _{L_{s}^{2}\left(
S^{n}\right) }.
\end{equation*}

\item Let k be a non-negative integer and let \ $s-\frac{n}{2}\geq k$\ .
Then $L_{s}^{2}\left( S^{n}\right) $ embeds continuously in $C^{k}\left(
S^{n}\right) ,$ that is, 
\begin{equation*}
L_{s}^{2}\left( S^{n}\right) \subset C^{k}\left( S^{n}\right) .
\end{equation*}%
If, moreover, $s-\frac{n}{2}>k,$ then the embedding is also compact.
\end{enumerate}
\end{lemma}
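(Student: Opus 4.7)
The plan is to handle the three estimates in the order stated, exploiting the spectral description of $L_s^2(S^n)$ through the Fourier expansion in the orthonormal system $\{Y_{mk}^{(n)}\}$. In each case I would expand $\varphi = \sum_{m,k} \hat\varphi_{mk}^{(n)} Y_{mk}^{(n)}$ and manipulate the spectral weight $(m+\frac{n-1}{2})^{2s}$ together with Parseval's identity applied to the $L^2_0$ norm. For (1), the expansion of $\varphi \in \mathbb{P}_a(S^n)$ terminates at $m=a$, so the increasing weight on this finite sum is dominated by its value at $m=a$; factoring $(a+\frac{n-1}{2})^{2s}$ out of the sum and recognizing what remains as $\|\varphi\|_{L_0^2(S^n)}^2$ gives the inequality. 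For (2), orthogonality to $\mathbb{P}_a(S^n)$ forces $\hat\varphi_{mk}^{(n)}=0$ for $m\leq a$; writing $(m+\frac{n-1}{2})^{2t}=(m+\frac{n-1}{2})^{2(t-s)}(m+\frac{n-1}{2})^{2s}$ and using $t\leq s$ makes the first factor decreasing in $m$, so it is dominated by its value at the smallest admissible index $m=a+1 \geq a$.

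For (3), the crucial tool is the addition theorem (Proposition~8). Setting $\omega=\nu$ there yields the pointwise bound $\sum_k |Y_{mk}^{(n)}(\omega)|^2 \leq C\, N(n,m)$ with $N(n,m)=O(m^{n-1})$ from the closed form stated after Lemma~10. Applying the Cauchy--Schwarz inequality to the Fourier expansion with the factorization $1 = (m+\frac{n-1}{2})^{s} \cdot (m+\frac{n-1}{2})^{-s}$ gives
\[ |\varphi(\omega)|^2 \;\leq\; \|\varphi\|_{L_s^2(S^n)}^2 \sum_{m=0}^{\infty} \Bigl(m+\tfrac{n-1}{2}\Bigr)^{-2s} \sum_{k=1}^{N(n,m)} |Y_{mk}^{(n)}(\omega)|^2, \]
and the tail series converges uniformly in $\omega$ once $2s - (n-1) > 1$, i.e., $s > n/2$, establishing the $C^0$ embedding. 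For $k \geq 1$ the same argument is applied to $\nabla^j\varphi$, after showing that the spinor derivatives $\nabla^j$ scale like $m^j$ on $\mathbb{H}_m(S^n)$; the threshold of convergence is then shifted to $s - n/2 > j$, and summing over $j = 0,\ldots,k$ delivers continuous embedding into $C^k(S^n)$.

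The compact embedding when $s-n/2 > k$ strictly is obtained by factoring the inclusion as $L^2_s(S^n) \hookrightarrow L^2_{s'}(S^n) \hookrightarrow C^k(S^n)$ for an intermediate $s'$ with $k+n/2 < s' < s$. The second arrow is the continuous embedding just established. The first arrow is compact by a Rellich-type approximation argument: the finite-rank projections $T_a$ from Definition~11 converge to the identity in operator norm from $L^2_s$ to $L^2_{s'}$, because by item (2) the tail $(I-T_a)\varphi$ lies in $\mathbb{P}_a^\perp(S^n) \cap L^2_s(S^n)$ and satisfies $\|(I-T_a)\varphi\|_{L^2_{s'}} \leq (a+\frac{n-1}{2})^{s'-s} \|\varphi\|_{L^2_s} \to 0$ as $a\to\infty$. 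A norm limit of finite-rank operators is compact, yielding the claim.

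The main obstacle is the derivative estimate $|\nabla^j Y_{mk}^{(n)}| \leq C\, m^j |Y_{mk}^{(n)}|$ used in the $C^k$ step. In the Clifford setting the spinor connection $\nabla$ does not commute with the Dirac--Beltrami operator $\Gamma_\omega$, nor with Clifford multiplication by $\omega$, so the spherical-harmonic argument is not purely algebraic. I would establish the bound by induction on $j$, using Lemma~5 to track the commutator $[\Gamma_\omega,\omega]$ together with the eigenvalue identities $\Gamma_\omega p_m = m\, p_m$ on $P_m(S^n)$ and $\Gamma_\omega q_m = -(m+n-1)\, q_m$ on $Q_m(S^n)$ that underlie the spectral resolution in Lemma~15; alternatively I would invoke a standard elliptic-regularity estimate for $\Gamma_\omega$ of the type in [1] or [9] and convert back to a pointwise bound via the addition theorem applied to the derivative kernels.
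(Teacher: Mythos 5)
Your handling of items (1) and (2) coincides with the paper's: expand $\varphi$ in the $Y_{mk}^{(n)}$ basis, note that membership in $\mathbb{P}_a$ (resp. $\mathbb{P}_a^\perp$) truncates the sum to $m\leq a$ (resp. $m\geq a+1$), and factor out the extremal value of the monotone weight. Your $k=0$ step of (3), via Cauchy--Schwarz and the addition theorem applied to the Christoffel sum $\sum_k|Y_{mk}^{(n)}(\omega)|^2$, is also the paper's argument. Where you genuinely diverge is in the passage to $k\geq 1$ and in the compactness claim. The paper estimates $\Vert D_{\mathbf{s}}^{\alpha}\varphi\Vert_{L^2_{s-\alpha}}$ for $\alpha\leq k$, exploiting the explicit eigenvalues of the conformal Dirac operator on $P_m$ and $Q_m$, and then asserts $\varphi\in C^k(S^n)$ because $D_{\mathbf{s}}^{\alpha}\varphi\in C^0$; this keeps the spectral algebra clean but quietly skips the conversion between bounds on $D_{\mathbf{s}}^{\alpha}$ and the $\nabla^j$ derivatives that actually define the $C^k$ norm. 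You instead work directly with $\nabla^j$, which matches the definition of $C^k(S^n)$ but requires the derivative bound you correctly identify as the main obstacle. One caution there: the pointwise inequality $|\nabla^j Y_{mk}^{(n)}|\leq C\,m^j|Y_{mk}^{(n)}|$ cannot hold literally, since the right side vanishes wherever $Y_{mk}^{(n)}$ does; what is actually needed, and what your alternative route (elliptic estimates for $\Gamma_\omega$ plus the addition theorem for derivative kernels) would supply, is an addition-theorem-type bound on $\sum_k|\nabla^j Y_{mk}^{(n)}(\omega)|^2$ of order $m^{2j}N(n,m)$. Finally, you prove the compact embedding into $C^k(S^n)$ for $s-n/2>k$ by factoring through an intermediate $L^2_{s'}(S^n)$ with $k+n/2<s'<s$ and invoking the Rellich-type compactness of $L^2_s\hookrightarrow L^2_{s'}$; the paper establishes only the compactness of $L^2_s\hookrightarrow L^2_t$ (oddly, inside the proof of item (2), which claims no compactness) and never returns to the stronger claim appended to item (3), so your argument completes a step the paper leaves open.
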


\begin{proof}
For (1), let $\varphi \in \mathbb{P}_{a}\left( S^{n}\right) .$ Then we have 
\begin{eqnarray*}
\left\Vert \varphi \right\Vert _{L_{s}^{2}\left( S^{n}\right) }^{2}
&=&\sum_{m=0}^{a}\sum_{j=1}^{N\left( n,m\right) }\left( m+\frac{n-1}{2}%
\right) ^{2s}\left\vert \hat{\varphi}_{mj}^{\left( n\right) }\right\vert ^{2}
\\
&\leq &\left( a+\frac{n-1}{2}\right) ^{2s}\left\Vert \varphi \right\Vert
_{L_{0}^{2}\left( S^{n}\right) }
\end{eqnarray*}%
pointwise. For (2), for any $\varphi \in \mathbb{P}_{a}^{\bot }\left(
S^{n}\right) $ $\cap L_{s}^{2}\left( S^{n}\right) $ we have for $s\geq t\geq
0,$%
\begin{eqnarray*}
\left\Vert \varphi \right\Vert _{L_{t}^{2}\left( S^{n}\right) }^{2}
&=&\sum_{m=a+1}^{\infty }\sum_{j=1}^{N\left( n,m\right) }\left( m+\frac{n-1}{%
2}\right) ^{2t}\left\vert \hat{\varphi}_{mj}^{\left( n\right) }\right\vert
^{2} \\
&\leq &\left( a+1+\frac{n-1}{2}\right) ^{2\left( t-s\right)
}\sum_{m=a+1}^{\infty }\sum_{j=1}^{N\left( n,m\right) }\left( m+\frac{n-1}{2}%
\right) ^{2s}\left\vert \hat{\varphi}_{mj}^{\left( n\right) }\right\vert ^{2}
\\
&\leq &\left( a+1+\frac{n-1}{2}\right) ^{2\left( t-s\right) }\left\Vert
\varphi \right\Vert _{L_{s}^{2}\left( S^{n}\right) }^{2},
\end{eqnarray*}%
which implies that the embedding is bounded.

To show that the embedding is compact, let I denote the inclusion map from $%
L_{s}^{2}\left( S^{n}\right) $ to $L_{t}^{2}\left( S^{n}\right) .$ Then for $%
\varphi \in L_{t}^{2}\left( S^{n}\right) ,$ we have 
\begin{equation*}
\left\Vert \left( T_{a}-I\right) \varphi \right\Vert _{L_{t}^{2}\left(
S^{n}\right) }\leq \left( a+\frac{n-1}{2}\right) ^{t-s}\left\Vert \varphi
\right\Vert _{L_{s}^{2}\left( S^{n}\right) }
\end{equation*}%
since $\left( T_{a}-I\right) \varphi \in \mathbb{P}_{a}^{\bot }\left(
S^{n}\right) $ $\cap L_{s}^{2}\left( S^{n}\right) .$ Observe that as a$%
\rightarrow \infty ,$ $\left\Vert \left( T_{a}-I\right) \varphi \right\Vert
_{L_{t}^{2}\left( S^{n}\right) }\rightarrow 0$. Now since the limit of
finite rank operators is a compact operator we see that the embedding of $%
L_{s}^{2}\left( S^{n}\right) $ into $L_{t}^{2}\left( S^{n}\right) $ is
compact.

For (3), consider first the case $k=0$. We must estimate the sup-norm of a
smooth function on $S^{n}$ in terms of the $L_{s}^{2}\left( S^{n}\right) $
norm. Let $\varphi \in \mathbb{P}_{a}^{\bot }\left( S^{n}\right) $ $\cap
L_{s}^{2}\left( S^{n}\right) .$ Then we have%
\begin{equation*}
\varphi \left( \omega \right) =\sum_{m=a+1}^{\infty }\sum_{j=1}^{N\left(
n,m\right) }\hat{\varphi}_{mj}^{\left( n\right) }Y_{mj}^{\left( n\right)
}\left( \omega \right) ,\text{ }\omega \in S^{n}.
\end{equation*}%
Assume this Fourier series is uniformly convergent when $s>\frac{n}{2},$ so
that it is also true in the pointwise sense. Hence, it follows from the
Cauchy-Schwarz inequality that%
\begin{eqnarray*}
\left\vert \varphi \left( \omega \right) \right\vert &=&\left\vert
\sum_{m=a+1}^{\infty }\sum_{j=1}^{N\left( n,m\right) }\hat{\varphi}%
_{mj}^{\left( n\right) }Y_{mj}^{\left( n\right) }\left( \omega \right)
\right\vert \\
&\leq &\sum_{m=a+1}^{\infty }\sum_{j=1}^{N\left( n,m\right) }\left[ \left( m+%
\frac{n-1}{2}\right) ^{s}\left\vert \hat{\varphi}_{mj}^{\left( n\right)
}\right\vert \right] \left[ \left( m+\frac{n-1}{2}\right) ^{-s}\left\vert
Y_{mj}^{\left( n\right) }\left( \omega \right) \right\vert \right] \\
&\leq &\sum_{m=a+1}^{\infty }\sum_{j=1}^{N\left( n,m\right) }\left[ \left( m+%
\frac{n-1}{2}\right) ^{2s}\left\vert \hat{\varphi}_{mj}^{\left( n\right)
}\right\vert ^{2}\right] ^{1/2}\sum_{m=a+1}^{\infty }\sum_{j=1}^{N\left(
n,m\right) }\left[ \left( m+\frac{n-1}{2}\right) ^{-2s}\left\vert
Y_{mj}^{\left( n\right) }\left( \omega \right) \right\vert ^{2}\right] ^{1/2}
\\
&\leq &\left\Vert \varphi \right\Vert _{L_{s}^{2}\left( S^{n}\right) }\left(
\sum_{m=a+1}^{\infty }\sum_{j=1}^{N\left( n,m\right) }\left[ \left( m+\frac{%
n-1}{2}\right) ^{-2s}\left\vert Y_{mj}^{\left( n\right) }\left( \omega
\right) \right\vert ^{2}\right] \right) \\
&=&\left\Vert \varphi \right\Vert _{L_{s}^{2}\left( S^{n}\right) }\left( 
\frac{1}{\omega _{n}}\sum_{m=a+1}^{\infty }\frac{N\left( n,m\right) }{\left(
m+\frac{n-1}{2}\right) ^{2s}}\right) ^{1/2}.
\end{eqnarray*}%
Observe that there exists positive constants $c_{1},c_{2}$ independent of $m$
such that 
\begin{equation*}
c_{1}\left( m+\frac{n-1}{2}\right) ^{n-1}\leq N(n,m)\leq c_{2}\left( m+\frac{%
n-1}{2}\right) ^{n-1}.
\end{equation*}%
By the integral test, the second term in the sum is finite if an only if $%
s>n/2.$

Furthermore, the sum satisfies%
\begin{equation*}
c_{3}\left( m+\frac{n-1}{2}\right) ^{n-2s}\leq \frac{1}{\omega _{n}}%
\sum_{m=a+1}^{\infty }\frac{N\left( n,m\right) }{\left( m+\frac{n-1}{2}%
\right) ^{2s}}\leq c_{4}\left( m+\frac{n-1}{2}\right) ^{n-2s}
\end{equation*}%
since 
\begin{equation*}
c_{3}\left( m+\frac{n-1}{2}\right) ^{n-2s}\leq \lim_{M\rightarrow \infty
}\int_{a}^{M}\left( x+\frac{n-1}{2}\right) ^{n-2s-1}dx\leq c_{4}\left( m+%
\frac{n-1}{2}\right) ^{n-2s}
\end{equation*}%
for appropriate constants $c_{3},c_{4}$ independent of $a$. Therefore, for $%
s>n/2$ with the positive constant $c_{5}=\sqrt{c_{4}}$%
\begin{equation*}
\sup_{\omega \in S^{n}}\left\vert \varphi \left( \omega \right) \right\vert
\leq c_{5}\left( a+\frac{n-1}{2}\right) ^{n-2s}\left\Vert \varphi
\right\Vert _{L_{s}^{2}\left( S^{n}\right) }
\end{equation*}%
which is the case for $k=0.$

For the general case, fix $k$ and choose $\varphi \in \mathbb{P}_{a}^{\bot
}\left( S^{n}\right) $ $\cap L_{s}^{2}\left( S^{n}\right) $ with $s-n/2\geq
k.$ For any $\alpha \leq k$ where $\alpha \in 
\mathbb{N}
,$ we have 
\begin{equation*}
\left\Vert D_{\mathbf{s}}^{\alpha }\varphi \left( \omega \right) \right\Vert
_{L_{s-\alpha }^{2}\left( S^{n}\right) }=\left\Vert \sum_{m=a+1}^{\infty
}\sum_{j=1}^{N\left( n,m\right) }\hat{\varphi}_{mj}^{\left( n\right) }\left(
D_{\mathbf{s}}^{\alpha }Y_{mj}^{\left( n\right) }\left( \omega \right)
\right) \right\Vert _{L_{s-\alpha }^{2}\left( S^{n}\right) }
\end{equation*}%
\begin{eqnarray*}
&\leq &\sum_{m=a+1}^{\infty }\sum_{j=1}^{N\left( n,m\right) }\left[ \left( m+%
\frac{n-1}{2}\right) ^{s}\left\vert \hat{\varphi}_{mj}^{\left( n\right)
}\right\vert \right] \left[ \left( m+\frac{n-1}{2}\right) ^{-s}\left( m+%
\frac{n}{2}\right) ^{\alpha }\left\vert Y_{mj}^{\left( n\right) }\left(
\omega \right) \right\vert \right] \\
&\leq &\sum_{m=a+1}^{\infty }\sum_{j=1}^{N\left( n,m\right) }\left[ \left( m+%
\frac{n-1}{2}\right) ^{2s}\left\vert \hat{\varphi}_{mj}^{\left( n\right)
}\right\vert ^{2}\right] ^{1/2}\cdot \left[ \left( m+\frac{n-1}{2}\right)
^{-2s}\left( m+\frac{n}{2}\right) ^{2\alpha }\left\vert Y_{mj}^{\left(
n\right) }\left( \omega \right) \right\vert ^{2}\right] ^{1/2} \\
&\leq &\left\Vert \varphi \right\Vert _{L_{s}^{2}\left( S^{n}\right) }\left( 
\frac{1}{\omega _{n}}\sum_{m=a+1}^{\infty }\frac{N\left( n,m\right) \left( m+%
\frac{n}{2}\right) ^{2\alpha }}{\left( m+\frac{n-1}{2}\right) ^{2s}}\right)
^{1/2}.
\end{eqnarray*}%
Observe that there exists constants $\hat{c}_{1}$ and $\hat{c}_{2}$ such
that 
\begin{equation*}
\hat{c}_{1}\left( m+\frac{n-1}{2}\right) ^{\alpha }\leq \left( m+\frac{n}{2}%
\right) ^{\alpha }\leq \hat{c}_{2}\left( m+\frac{n-1}{2}\right) ^{\alpha }
\end{equation*}%
for all $\alpha \in 
\mathbb{N}
.$ As before, there exists positive constants $\hat{c}_{3}$ and \ $\hat{c}%
_{4}$ such that $\hat{c}_{3}$%
\begin{equation*}
\hat{c}_{3}\left( m+\frac{n-1}{2}\right) ^{n-2s+2\alpha }\leq \frac{1}{%
\omega _{n}}\sum_{m=a+1}^{\infty }\frac{N\left( n,m\right) }{\left( m+\frac{%
n-1}{2}\right) ^{2s-2\alpha }}\leq \hat{c}_{4}\left( m+\frac{n-1}{2}\right)
^{n-2s-2\alpha }.
\end{equation*}%
Therefore, for $\alpha <s-n/2,$ we have 
\begin{equation*}
\left\Vert D_{\mathbf{s}}^{\alpha }\varphi \left( \omega \right) \right\Vert
_{L_{s-\alpha }^{2}\left( S^{n}\right) }\leq \hat{c}_{5}\left( a+\frac{n-1}{2%
}\right) ^{\frac{n}{2}-s+\alpha }\left\Vert \varphi \right\Vert
_{L_{s}^{2}\left( S^{n}\right) };
\end{equation*}%
with positive constants $\hat{c}_{5}=\sqrt{\hat{c}_{4}}.$ This implies that $%
D_{\mathbf{s}}^{\alpha }:L_{s}^{2}\left( S^{n}\right) \rightarrow
L_{s-\alpha }^{2}\left( S^{n}\right) $ is continuous. By the first part of
the proof, $D_{\mathbf{s}}^{\alpha }\varphi \in C^{0}\left( S^{n}\right) $
for all $\alpha \leq k,$ so that $f\in C^{k}\left( S^{n}\right) .$
\end{proof}

The following estimates should be able to be used in polynomial
interpolation on the unit sphere in Clifford analysis. For more details,
see, $\left[ 6\right] .$


\begin{thebibliography}{99}
\bibitem{} A. Balinsky and J. Ryan, \textit{Some sharp L}$^{2}$\textit{\
inequalities for Dirac type operators, Symmetry, Integrability,
Integrability, Geometry, Methods and Applications} (S.I.G.M.A.) (2007), vol.
3, 114, 10 pages.

\bibitem{} D.M.J. Calderbank, \textit{Geometrical aspects of spinor and
twistor analysis}, Ph.D-Thesis, University of Warwick, 1995.

\bibitem{} J. Cnops, The Dirac operator on hypersurfaces and spheres. Dirac
Operators in Analysis, Newark (1997), J. Ryan, ed., 141-151.

\bibitem{} J. Cnops, \textit{An introduction to Dirac operators on manifolds}%
, Progress in Mathematical Physics, v. 24, Birkhauser, Boston, 2002.

\bibitem{} R. Delanghe, F. Sommen and V. Sou\u{c}ek, \textit{Clifford
analysis and spinor-valued functions,} Kluwer Academic Publishers,
Dordrecht, Holland, 1992.

\bibitem{} K. Hesse and I.H. Sloan, \textit{Hyperinterpolation on the sphere}%
. In `Frontiers in Interpolation and Approximation (Dedicated to the memory
of Ambikeshwar Sharma)' (eds.: N. K. Govil,H. N. Mhaskar, Ram N. Mohapatra,
Zuhair Nashed, J. Szabados), Chapman \& Hall/CRC, 2006, pp. 213--248.

\bibitem{} H.B. Lawson and M.-L. Michelson, Spin geometry, Princeton
University Press, Princeton, 1989.

\bibitem{} P. Lounesto, \textit{Clifford algebras and spinors}, London
Mathematical Society Lecture Note Series 239, Cambridge University Press,
Cambridge, 1997.

\bibitem{} C. Muller, \textit{Spherical harmonics}, Lecture Notes in\
Mathematics \textbf{17}, Springer-Verlag Berlin Heidelberg New York, 1966.

\bibitem{} J. Roe, \textit{Elliptic operators, topology, and asymptotic
methods}, Pitman Research Notes in Mathematics, No. 179, Longman Scientific
\& Technical, Essex, England, 1988.

\bibitem{} J. Ryan, \textit{Dirac operators on spheres and hyperbolae}, Bol.
Soc. Mat. Mexicana \textbf{3} (1996), 255-269.

\bibitem{} J. Ryan, Basic Clifford analysis, Cubo Matematica Educacional 
\textbf{2} (2000), 226-256.
\end{thebibliography}
\end{document}